\DeclareMathOperator{\Max}{Max}
\DeclareMathOperator{\Min}{Min}
\newtheorem{theorem}{Theorem}[section]
\newtheorem{proposition}[theorem]{Proposition}
\newtheorem{remark}[theorem]{Remark}
\newtheorem{example}[theorem]{Example}
\title{Lagrange-like interpolation in unitary rings, Boolean algebras and Boolean posets}
\author{Ivan~Chajda and Helmut~L\"anger}
\date{}
\begin{document}

\footnotetext[1]{Support of the research of the first author by the Czech Science Foundation (GA\v CR), project 25-20013L, and by IGA, project P\v rF~2025~008, and support of the research of the second author by the Austrian Science Fund (FWF), project 10.55776/PIN5424624, is gratefully acknowledged.}

\footnotetext[2]{In fuzzy logic this function is called Baaz delta since the Austrian logician Matthias Baaz axiomatized it for G\"odel logics.}
	
\maketitle
	
\begin{abstract}
It is known that every function with a finite support over a given field can be interpolated by means of the Lagrangian polynomial. The question is if a similar interpolation is possible if one considers a unitary ring or a Boolean algebra instead of a field. We get a positive answer to this question provided the similarity type of the algebra in question is enriched with one more unary operation, the so-called Baaz delta. We get an explicit construction of this interpolation polynomial in both the cases. When going to Boolean posets, we have a lack of operations but these can be substituted by the operators $\Min U$ and $\Max L$. Hence, we generalize also the Baaz delta for posets as an operator and then we can derive an explicit interpolation term constructed by means of these operators also for Boolean posets.
\end{abstract}

{\bf AMS Subject Classification:} 16W99, 06E25, 06E30, 06E75, 06C15

{\bf Keywords:} Interpolation formula, polynomial, Baaz delta, unitary ring, Boolean algebra, complemented poset, symmetric difference, Boolean poset

\section{Introduction}

The aim of this short note is to show that a Lagrange-like interpolation of a given function with finite support by a certain polynomial is possible also in unitary rings and Boolean algebras when their signature is enriched with a certain unary operation, the so-called Baaz delta.

Having a field $\mathbf K=(K,+,\cdot)$, every function with a finite support, i.e.\ $f\colon\{a_1,\ldots,a_n\}\to K$ for some distinct elements $a_1,\ldots,a_n$ of $K$, can be interpolated by a polynomial $p(x)$ over $\mathbf K$ where $p(a_i)=f(a_i)$ for $i=1,\ldots,n$. The most profound interpolation is by means of the so-called Lagrangian polynomial, see e.g.\ \cite V. For the reader's convenience, we recall it.

In the following we use {\em Kronecker's delta} $\delta_{ij}$ defined by $\delta_{ij}:=1$ if $i=j$ and $\delta_{ij}:=0$ otherwise and the {\em Baaz delta} $^{2)}$ $\Delta$ defined by $\Delta(0):=0$ and $\Delta(x):=1$ otherwise.

\begin{theorem}\label{th4}
{\rm(Lagrange's interpolation formula)} Let $\mathbf K=(K,+,\cdot)$ be a field, $n>1$, $a_1,\ldots,a_n$ different elements of $K$ and $f\colon K\to K$ and define $p_1,\ldots,p_n,p\colon K\to K$ by
\begin{align*}
p_i(x) & \:=\prod_{\substack{j=1 \\ j\ne i}}^n\frac{x-a_j}{a_i-a_j}\text{ for }i=1,\ldots,n, \\
  p(x) & :=\sum_{i=1}^nf(a_i)p_i(x)
\end{align*}
for all $x\in K$. Then $p(x)$ is a polynomial over $\mathbf K$ satisfying $p(a_k)=f(a_k)$ for $k=1,\ldots,n$.
\end{theorem}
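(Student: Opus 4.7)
The plan is to reduce the problem to a single key identity, namely that $p_i(a_k)=\delta_{ik}$ for all $i,k\in\{1,\ldots,n\}$, and then derive the conclusion by linearity of the sum defining $p$.

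First I would verify that each $p_i(x)$ is a well-defined polynomial over $\mathbf K$. Since the $a_1,\ldots,a_n$ are pairwise distinct and $\mathbf K$ is a field, the elements $a_i-a_j$ with $j\ne i$ are all nonzero and hence invertible. Thus each factor $(x-a_j)/(a_i-a_j)$ is a polynomial of degree $1$ in $x$ with coefficients in $K$, and $p_i(x)$ is a polynomial of degree $n-1$. Consequently $p(x)=\sum_{i=1}^n f(a_i)p_i(x)$ is also a polynomial over $\mathbf K$.

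Next I would establish the crucial identity $p_i(a_k)=\delta_{ik}$ by a case distinction on whether $k=i$:
\begin{itemize}
\item If $k=i$, then each factor of the product $p_i(a_i)$ equals $(a_i-a_j)/(a_i-a_j)=1$, so $p_i(a_i)=1$.
\item If $k\ne i$, then among the indices $j\in\{1,\ldots,n\}\setminus\{i\}$ there is the index $j=k$, and the corresponding factor has numerator $a_k-a_k=0$. Since $K$ is a field (in particular has no zero divisors), the whole product vanishes, giving $p_i(a_k)=0$.
\end{itemize}

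Finally, substituting $x=a_k$ into the definition of $p$ and using the identity just proved yields
\[
p(a_k)=\sum_{i=1}^n f(a_i)p_i(a_k)=\sum_{i=1}^n f(a_i)\delta_{ik}=f(a_k),
\]
which is the desired conclusion. There is no real obstacle here; the only point requiring care is the verification that for $k\ne i$ the index $k$ actually appears in the product range (which it does, since $k\ne i$), so the cancellation really occurs. This is precisely the place in the classical argument where the availability of multiplicative inverses for nonzero elements is used, and it is exactly this feature that will need to be replaced when we pass from fields to unitary rings or Boolean algebras by means of the Baaz delta.
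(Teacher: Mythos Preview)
Your proof is correct and follows exactly the same approach as the paper: establish $p_i(a_k)=\delta_{ik}$ and then conclude by substituting into the defining sum for $p$. You supply more detail (well-definedness of the $p_i$ and the case distinction for $\delta_{ik}$) than the paper does, but the argument is identical.
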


\begin{proof}
We have $p_i(a_k)=\delta_{ik}$ for $i,k=1,\ldots,n$ and hence
\[
p(a_k)=\sum_{i=1}^nf(a_i)p_i(a_k)=\sum_{i=1}^nf(a_i)\delta_{ik}=f(a_k)
\]
for $k=1,\ldots,n$.
\end{proof}

In the next sections we will modify the construction of the polynomials $p_i(x)$ in such a way that the interpolation polynomial $p(x)$ can be derived similarly as in Theorem~\ref{th4} also for unitary rings and Boolean algebras.

\section{Interpolation in unitary rings}

For unitary rings we can formulate our interpolation formula as follows:

\begin{theorem}\label{th3}
Let $(R,+,\cdot,0,1)$ be a unitary ring, $n>1$, $a_1,\ldots,a_n$ different elements of $R$ and $f\colon R\to R$ and define $p_1,\ldots,p_n,p\colon R\to R$ by
\begin{align*}
p_i(x) & :=\prod_{\substack{j=1 \\ j\ne i}}^n\Delta(x-a_j)\text{ for }i=1,\ldots,n, \\
  p(x) & :=\sum_{i=1}^nf(a_i)p_i(x)
\end{align*}
for all $x\in B$. Then $p(x)$ is a polynomial over the enriched unitary ring $(R,+,\cdot,\Delta,0,1)$ satisfying $p(a_k)=f(a_k)$ for $k=1,\ldots,n$.
\end{theorem}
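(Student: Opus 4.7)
The plan is to imitate the proof of Theorem~\ref{th4}. Once one verifies the Kronecker-type identity $p_i(a_k)=\delta_{ik}$ (where $0$ and $1$ now denote the distinguished elements of the ring $R$), the final computation
\[
p(a_k)=\sum_{i=1}^n f(a_i)p_i(a_k)=\sum_{i=1}^n f(a_i)\delta_{ik}=f(a_k)
\]
is literally the same as in the field case and uses nothing beyond the ring axioms. So the whole content of the proof reduces to this single verification.

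To establish $p_i(a_k)=\delta_{ik}$ I would split on whether $i=k$. If $i=k$, every factor of $p_k(a_k)=\prod_{j\ne k}\Delta(a_k-a_j)$ has the form $\Delta(a_k-a_j)$ with $j\ne k$; since $a_1,\ldots,a_n$ are pairwise distinct we have $a_k-a_j\ne 0$, hence $\Delta(a_k-a_j)=1$, and the product collapses to $1$. If $i\ne k$, then $k$ occurs among the indices $j$ with $j\ne i$, so the product contains the factor $\Delta(a_k-a_k)=\Delta(0)=0$; the absorbing law $0\cdot r=r\cdot 0=0$, which holds in any ring, then forces $p_i(a_k)=0$.

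The step I would flag as the most delicate is formal rather than computational: one must confirm that $p(x)$ really is a polynomial over the enriched signature $(R,+,\cdot,\Delta,0,1)$. This is immediate from the defining formulas, since $p(x)$ is built from the variable $x$, the constants $a_1,\ldots,a_n$ and $f(a_1),\ldots,f(a_n)$, and the operations $+$, $\cdot$ and $\Delta$. No commutativity, invertibility, or further structure of $R$ is used; the absorbing property of $0$ and the identity property of $1$ are the only ring-theoretic facts invoked. Compared to the field case the conceptual gain is that $\Delta(x-a_j)$ now plays the role of the fraction $(x-a_j)/(a_i-a_j)$, so no division is required because $\Delta$ already takes values in $\{0,1\}$.
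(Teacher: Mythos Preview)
Your proof is correct and follows exactly the same approach as the paper's: verify $p_i(a_k)=\delta_{ik}$ and then compute $p(a_k)=\sum_i f(a_i)\delta_{ik}=f(a_k)$. The paper's argument is in fact terser than yours, simply asserting the Kronecker identity without the case split you (correctly) spell out.
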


\begin{proof}
It is evident that $p(x)$ is a polynomial over $(R,+,\cdot,\Delta,0,1)$. We have $p_i(a_k)=\delta_{ik}$ for $i,k=1,\ldots,n$ and hence
\[
p(a_k)=\sum_{i=1}^nf(a_i)p_i(a_k)=\sum_{i=1}^nf(a_i)\delta_{ik}=f(a_k)
\]
for $k=1,\ldots,n$.
\end{proof}

Note that the function $p(x)$ from Theorem~\ref{th3} is in fact a polynomial over the enriched unitary ring $(R,+,\cdot,\Delta,0,1)$, but it need not be a polynomial over $(R,+,\cdot,0,1)$.

\begin{example}
Consider a unitary ring $(R,+,\cdot,1)$, $a,b,c,d,e,g,h,i\in R$ such that $a,b,c,d$ are different and a mapping $f\colon R\to R$ with
\begin{align*}
f(a) & =e, \\
f(b) & =g, \\
f(c) & =h, \\
f(d) & =i.	
\end{align*}
In accordance with Theorem~\ref{th3} define $p_1,\ldots,p_4,p\colon R\to R$ by
\begin{align*}
p_1(x) & :=\Delta(x-b)\cdot\Delta(x-c)\cdot\Delta(x-d), \\	
p_2(x) & :=\Delta(x-a)\cdot\Delta(x-c)\cdot\Delta(x-d), \\	
p_3(x) & :=\Delta(x-a)\cdot\Delta(x-b)\cdot\Delta(x-d), \\	
p_4(x) & :=\Delta(x-a)\cdot\Delta(x-b)\cdot\Delta(x-c), \\
  p(x) & :=e\cdot p_1(x)+g\cdot p_2(x)+h\cdot p_3(x)+i\cdot p_4(x)
\end{align*}
for all $x\in R$. Then really
\begin{align*}
p(a) & =e, \\
p(b) & =g, \\
p(c) & =h, \\
p(d) & =i.	
\end{align*}
\end{example}

\section{Interpolation in Boolean algebras}

We continue with the case of Boolean algebras.

It is well-known that for Boolean algebras $(B,\vee,\wedge,{}',0,1)$ the corresponding Boolean ring $(B,+,\wedge,0,1)$ is an idempotent unitary ring satisfying the identity $x+x\approx0$ and hence also the identity $x-y\approx x+y$, see e.g. \cite B. The binary operation $+$ defined by
\[
x+y:=(x'\wedge y)\vee(x\wedge y')
\]
for all $x,y\in B$ is called {\em symmetric difference}. Now $x+y=0$ is equivalent to $x=y$ since $(B,+,0)$ is an involutory group. Hence we can derive the interpolation formula for Boolean algebras from that valid for unitary rings stated in Theorem~\ref{th3}.

\begin{theorem}\label{th1}
Let $(B,\vee,\wedge,{}',0,1)$ be a Boolean algebra, $n>1$, $a_1,\ldots,a_n$ different elements of $B$ and $f\colon B\to B$ and define $p_1,\ldots,p_n,p\colon B\to B$ by
\begin{align*}
p_i(x) & :=\bigwedge_{\substack{j=1 \\ j\ne i}}^n\Delta(x+a_j)\text{ for }i=1,\ldots,n, \\
  p(x) & :=\bigvee_{i=1}^n\big(f(a_i)\wedge p_i(x)\big)
\end{align*}
for all $x\in B$. Then $p(x)$ is a polynomial over the enriched Boolean algebra $(B,\vee,\wedge,{}',\Delta,0,1)$ satisfying $p(a_k)=f(a_k)$ for $k=1,\ldots,n$.
\end{theorem}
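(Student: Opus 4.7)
The plan is to follow the same pattern as Theorems~\ref{th4} and~\ref{th3}: establish the ``Kronecker'' property $p_i(a_k)=\delta_{ik}$ and then show that the outer join collapses to $f(a_k)$. The preliminary fact I will use, already recorded in the paragraph preceding the theorem, is that in the associated Boolean ring $(B,+,\wedge,0,1)$ the equation $x+y=0$ is equivalent to $x=y$, since $(B,+,0)$ is an involutory group. Consequently $\Delta(x+a_j)=0$ precisely when $x=a_j$ and $\Delta(x+a_j)=1$ otherwise.

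First I would fix indices $i,k\in\{1,\ldots,n\}$ and compute $p_i(a_k)=\bigwedge_{j\ne i}\Delta(a_k+a_j)$. If $k=i$, then for every $j\ne i$ we have $a_k\ne a_j$, hence $a_k+a_j\ne 0$ and $\Delta(a_k+a_j)=1$; the meet of $1$'s gives $p_i(a_k)=1$. If $k\ne i$, then the index $j=k$ occurs among the factors of the meet and contributes $\Delta(a_k+a_k)=\Delta(0)=0$, forcing $p_i(a_k)=0$. Together these give $p_i(a_k)=\delta_{ik}$.

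Second, I would substitute into the definition of $p$:
\[
p(a_k)=\bigvee_{i=1}^n\bigl(f(a_i)\wedge p_i(a_k)\bigr)=\bigvee_{i=1}^n\bigl(f(a_i)\wedge\delta_{ik}\bigr)=f(a_k),
\]
because the joinands with $i\ne k$ equal $f(a_i)\wedge 0=0$ while the joinand with $i=k$ equals $f(a_k)\wedge 1=f(a_k)$. That $p$ is a polynomial over the enriched Boolean algebra $(B,\vee,\wedge,{}',\Delta,0,1)$ is immediate, since it is built only from the operations $\vee,\wedge,{}',\Delta$ and the constants $a_1,\ldots,a_n,f(a_1),\ldots,f(a_n),0,1$.

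There is no real obstacle here; the argument is essentially the Boolean-algebra translation of Theorem~\ref{th3}, with ring multiplication $\cdot$ and ring sum $\sum$ replaced by $\wedge$ and $\bigvee$ respectively. The only point to keep in mind is that one could equally well derive the result from Theorem~\ref{th3} applied to the Boolean ring: at each $a_k$ exactly one joinand is nonzero, so the ring sum and the lattice join coincide there, which reconciles the two formulations.
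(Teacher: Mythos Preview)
Your proposal is correct and matches the paper's approach: the paper does not write out a separate proof of Theorem~\ref{th1} but instead derives it from Theorem~\ref{th3} via the Boolean ring, with the subsequent Remark explaining that $\bigvee$ may replace $\sum$ because at each $a_k$ only one joinand is nonzero---exactly the observation you make in your final paragraph. Your explicit verification of $p_i(a_k)=\delta_{ik}$ and the collapse of the join simply spells out what the paper leaves implicit.
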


\begin{remark}
Instead of
\[
p(x):=\sum_{i=1}^n\big(f(a_i)\wedge p_i(x)\big)
\]
we can take
\[
p(x):=\bigvee_{i=1}^n\big(f(a_i)\wedge p_i(x)\big)
\]
since
\[
\bigvee_{i=1}^n\big(f(a_i)\wedge\delta_{ik}\big)=f(a_k)=\sum_{i=1}^n\big(f(a_i)\wedge\delta_{ik}\big)
\]
where the sum sign stands for the symmetric difference.
\end{remark}

Hence the function $p(x)$ from Theorem~\ref{th1} is indeed a polynomial over the enriched Boolean algebra $(B,\vee,\wedge,{}',\Delta,0,1)$.

\begin{example}
The lattice depicted in Fig.~1
	
\vspace*{-4mm}
	
\begin{center}
\setlength{\unitlength}{7mm}
\begin{picture}(12,10)
\put(6,1){\circle*{.3}}
\put(3,3){\circle*{.3}}
\put(5,3){\circle*{.3}}
\put(7,3){\circle*{.3}}
\put(9,3){\circle*{.3}}
\put(1,5){\circle*{.3}}
\put(3,5){\circle*{.3}}
\put(5,5){\circle*{.3}}
\put(7,5){\circle*{.3}}
\put(9,5){\circle*{.3}}
\put(11,5){\circle*{.3}}
\put(3,7){\circle*{.3}}
\put(5,7){\circle*{.3}}
\put(7,7){\circle*{.3}}
\put(9,7){\circle*{.3}}
\put(6,9){\circle*{.3}}
\put(6,1){\line(-3,2)3}
\put(6,1){\line(-1,2)1}
\put(6,1){\line(1,2)1}
\put(6,1){\line(3,2)3}
\put(3,3){\line(-1,1)2}
\put(3,3){\line(0,1)2}
\put(3,3){\line(1,1)2}
\put(5,3){\line(-2,1)4}
\put(5,3){\line(1,1)2}
\put(5,3){\line(2,1)4}
\put(7,3){\line(-2,1)4}
\put(7,3){\line(0,1)2}
\put(7,3){\line(2,1)4}
\put(9,3){\line(-2,1)4}
\put(9,3){\line(0,1)2}
\put(9,3){\line(1,1)2}
\put(1,5){\line(1,1)2}
\put(1,5){\line(2,1)4}
\put(3,5){\line(0,1)2}
\put(3,5){\line(2,1)4}
\put(5,5){\line(0,1)2}
\put(5,5){\line(1,1)2}
\put(7,5){\line(-2,1)4}
\put(7,5){\line(1,1)2}
\put(9,5){\line(-2,1)4}
\put(9,5){\line(0,1)2}
\put(11,5){\line(-2,1)4}
\put(11,5){\line(-1,1)2}
\put(3,7){\line(3,2)3}
\put(5,7){\line(1,2)1}
\put(7,7){\line(-1,2)1}
\put(9,7){\line(-3,2)3}
\put(5.85,.3){$0$}
\put(2.35,2.85){$a$}
\put(4.35,2.85){$b$}
\put(7.4,2.85){$c$}
\put(9.4,2.85){$d$}
\put(.35,4.85){$e$}
\put(2.35,4.85){$g$}
\put(4.35,4.85){$h$}
\put(7.4,4.85){$h'$}
\put(9.4,4.85){$g'$}
\put(11.4,4.85){$e'$}
\put(2.35,6.85){$d'$}
\put(4.35,6.85){$c'$}
\put(7.4,6.85){$b'$}
\put(9.4,6.85){$a'$}
\put(5.85,9.4){$1$}
\put(1,-.75){{\rm Figure~1. $16$-element Boolean algebra}}
\end{picture}
\end{center}
	
\vspace*{4mm}
	
is the $16$-element Boolean algebra $(B,\vee,\wedge,{}',0,1)$. Now consider a mapping $f\colon B\to B$ with
\begin{align*}
 f(a) & =c', \\
 f(g) & =a', \\
f(b') & =h, \\
 f(1) & =e'.	
\end{align*}
In accordance with Theorem~\ref{th1} define $p_1,\ldots,p_4,p\colon B\to B$ by
\begin{align*}
p_1(x) & :=\Delta(x+g)\wedge\Delta(x+b')\wedge\Delta(x+1), \\	
p_2(x) & :=\Delta(x+a)\wedge\Delta(x+b')\wedge\Delta(x+1), \\	
p_3(x) & :=\Delta(x+a)\wedge\Delta(x+g)\wedge\Delta(x+1), \\	
p_4(x) & :=\Delta(x+a)\wedge\Delta(x+g)\wedge\Delta(x+b'), \\
  p(x) & :=\big(c'\wedge p_1(x)\big)\vee\big(a'\wedge p_2(x)\big)\vee\big(h\wedge p_3(x)\big)\vee\big(e'\wedge p_4(x)\big)
\end{align*}
for all $x\in B$. Then really
\begin{align*}
 p(a) & =c', \\
 p(g) & =a', \\
p(b') & =h, \\
 p(1) & =e'.	
\end{align*}
\end{example}

\section{Interpolation in Boolean posets}

Now we turn our attention to Boolean posets (alias Boolean ordered sets, see e.g.\ \cite N and \cite P). For this purpose let us recall several concepts introduced already in our previous paper \cite{CKL}.

Let $\mathbf P=(P,\le)$ be a poset, $a,b\in P$ and $A,B\subseteq P$. Then {\em $\Max A$} and {\em $\Min A$} denote the set of all maximal and minimal elements of $A$, respectively. We define
\begin{align*}
A\le B & \text{ if }x\le y\text{ for all }x\in A\text{ and all }y\in B, \\
  L(A) & :=\{x\in P\mid x\le A\}, \\
  U(A) & :=\{x\in P\mid A\le x\}.
\end{align*}
Here and in the following we identify singletons with their unique element. Instead of $L(\{a\})$, $L(\{a,b\})$, $L(A\cup\{a\})$, $L(A\cup B)$ and $L\big(U(A)\big)$ we simply write $L(a)$, $L(a,b)$, $L(A,a)$, $L(A,B)$ and $LU(A)$. Analogously we proceed in similar cases. 

Hence, by the binary operator $\Max L(x,y)$ and $\Min U(x,y)$ we mean the mapping from $P^2$ to $2^P$ assigning to each $(x,y)\in P^2$ the set of all maximal elements in the lower cone $L(x,y)$ of $x$ and $y$ and the set of all minimal elements in the upper cone $U(x,y)$ of $x$ and $y$, respectively. Of course, if the poset in question is a lattice then $\Max L(x,y)=x\wedge y$ and $\Min U(x,y)=x\vee y$. Thus also in the more general case of a poset, $\Max L(x,y)$  and $\Min U(x,y)$ should play the role of $x\wedge y$ and $x\vee y$, respectively.

Moreover, if there exists $\sup(a,b)$ then $\Min U(a,b)=a\vee b$ and, similarly, if $\inf(a,b)$ exists then $\Max L(a,b)=a\wedge b$. Of course, if $(P,\le,0,1)$ is a bounded poset then $a\vee0=a$, $a\vee1=1$, $a\wedge0=0$ and $a\wedge1=a$.

Let $P$ be set and $\,'$ a unary operation on $P$. Then $\,'$ is called an {\em involution} if it satisfies the identity $(x')'\approx x$. If $(P,\le)$ is a poset then $\,'$ is called {\em antitone} if $x,y\in P$ and $x\le y$ imply $f(y)\le f(x)$. If $(P,\le,0,1)$ is a bounded poset then $\,'$ is called a {\em complementation} if it satisfies the identities $L(x,x')\approx0$ and $U(x,x')\approx1$. A {\em poset} $(P,\le)$ is called {\em distributive} (see e.g.\ \cite P) if it satisfies one of the following equivalent conditions:
\begin{align*}
 L\big(U(x,y),z\big) & \approx LU\big(L(x,z),L(y,z)\big), \\	
UL\big(U(x,y),z\big) & \approx U\big(L(x,z),L(y,z)\big), \\	
 U\big(L(x,y),z\big) & \approx UL\big(U(x,z),U(y,z)\big), \\
LU\big(L(x,y),z\big) & \approx L\big(U(x,z),U(y,z)\big).
\end{align*}
A {\em Boolean poset} is a distributive bounded poset with a complementation. It can be shown that the complementation of a Boolean poset is unique and it is an antitone involution (see e.g.\ \cite{CKL}).

For Boolean posets $(B,\vee,\wedge,{}',0,1)$ we define a binary operator $+\colon B^2\to2^B$ by
\[
x+y:=\Min U\big(L(x',y),L(x,y')\big)
\]
for all $x,y\in B$. Observe that $x+y=\Min U\big(\Max L(x',y),\Max L(x,y')\big)$ for all $x,y\in B$.

An elementary but important result for our study of interpolation is the following proposition which shows that the binary operator $+$ in Boolean posets satisfies a similar property as the symmetric difference in Boolean algebras.

\begin{proposition}\label{prop1}
Let $(B,\le,{}',0,1)$ be a Boolean poset and $a,b\in B$. Then $a+b=0$ if and only if $a=b$.
\end{proposition}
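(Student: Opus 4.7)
The plan is to handle the two implications separately, with essentially all of the work falling in the ``only if'' direction.

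For the ``if'' direction, assume $a=b$. Then the two lower cones appearing in the definition of $a+b$ both equal $\{0\}$ by the complementation identity $L(x,x')\approx 0$, so $a+b=\Min U(0)=\{0\}$, which is $0$ under the singleton convention.

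For the ``only if'' direction, I first unpack the hypothesis $a+b=0$, i.e., $\Min U(L(a',b),L(a,b'))=\{0\}$. This forces $0\in U(L(a',b),L(a,b'))$, so $0$ is an upper bound of $L(a',b)\cup L(a,b')$; since $0$ is the least element of $B$ and each of the two cones trivially contains $0$, this yields $L(a',b)=L(a,b')=\{0\}$. I then invoke the first of the four distributivity conditions with $x:=a$, $y:=a'$, $z:=b$, giving
\[
L(U(a,a'),b)=LU(L(a,b),L(a',b)).
\]
The left-hand side reduces to $L(1,b)=L(b)$ by the complementation identity $U(x,x')\approx 1$, while the right-hand side reduces to $LU(L(a,b))$ because $L(a',b)=\{0\}$ and $0$ already belongs to $L(a,b)$. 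Hence $L(b)=LU(L(a,b))$.

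The inequality $b\le a$ now follows almost immediately: $b\in L(b)=LU(L(a,b))$ says that $b$ is a lower bound of every upper bound of $L(a,b)$, and $a$ is itself such an upper bound. Running the same distributive argument with the roles of $a$ and $b$ interchanged, using the second hypothesis $L(a,b')=\{0\}$, yields $L(a)=LU(L(a,b))$ and hence $a\le b$, so $a=b$. The main subtlety I anticipate lies in the bookkeeping between the set-valued operators and the elementwise complementation identities: one must read $L(x,x')\approx 0$ and $U(x,x')\approx 1$ as $L(x,x')=\{0\}$ and $U(x,x')=\{1\}$ so that the distributive identity collapses to the usable equation $L(b)=LU(L(a,b))$, from which the scalar comparison can be extracted.
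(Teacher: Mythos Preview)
Your proof is correct and follows essentially the same outline as the paper's: both first reduce $a+b=0$ to $L(a',b)=L(a,b')=\{0\}$, then apply one instance of distributivity together with the complementation identities to extract a comparison between $a$ and $b$, and finish by symmetry. The only cosmetic difference is that the paper instantiates the dual identity $U\big(L(x,y),z\big)\approx UL\big(U(x,z),U(y,z)\big)$ with $x=a$, $y=b'$, $z=a'$ to obtain $U(a')=U(a',b')$, hence $b'\le a'$ and $a\le b$ via the antitone involution, whereas you use the first $L$-form identity to get $L(b)=LU\big(L(a,b)\big)$ and read off $b\le a$ directly.
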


\begin{proof}
First assume $a+b=0$. Then $\Min U\big(L(a',b),L(a,b')\big)=0$ and hence $L(a',b)=L(a,b')=0$. Now we conclude
\begin{align*}
a' & \in U(a')=U(0,a')=U\big(L(a,b'),a'\big)=UL\big(U(a,a'),U(b',a')\big)=UL\big(1,U(a',b')\big)= \\
   & =ULU(a',b')=U(a',b')\subseteq U(b')
\end{align*}
and hence $b'\le a'$, i.e., $a\le b$. Interchanging the roles of $a$ and $b$ we obtain $b\le a$ and therefore $a=b$. If, conversely $a=b$ then
\[
a+b=a+a=\Min U\big(L(a',a),L(a,a')\big)=\Min U(0,0)=0.
\]
\end{proof}

It is worth noticing that $a+b$ need not be an element of $B$ but may be an arbitrary subset of $B$.

Although every finite uniquely complemented lattice is a Boolean algebra, see e.g.\ \cite D, this is not the case for posets.

The next example shows that the assertion from Proposition~\ref{prop1} need not hold if the complemented poset in question is not distributive.

\begin{example}
The poset $(P,\le,{}',0,1)$ visualized in Fig.~2
	
\vspace*{-4mm}
	
\begin{center}
\setlength{\unitlength}{7mm}
\begin{picture}(8,8)
\put(4,1){\circle*{.3}}
\put(1,3){\circle*{.3}}
\put(3,3){\circle*{.3}}
\put(5,3){\circle*{.3}}
\put(7,3){\circle*{.3}}
\put(1,5){\circle*{.3}}
\put(3,5){\circle*{.3}}
\put(5,5){\circle*{.3}}
\put(7,5){\circle*{.3}}
\put(4,7){\circle*{.3}}
\put(4,1){\line(-3,2)3}
\put(4,1){\line(-1,2)1}
\put(4,1){\line(1,2)1}
\put(4,1){\line(3,2)3}
\put(4,7){\line(-3,-2)3}
\put(4,7){\line(-1,-2)1}
\put(4,7){\line(1,-2)1}
\put(4,7){\line(3,-2)3}
\put(1,3){\line(0,1)2}
\put(1,3){\line(1,1)2}
\put(1,3){\line(2,1)4}
\put(3,3){\line(-1,1)2}
\put(3,3){\line(2,1)4}
\put(5,3){\line(-2,1)4}
\put(5,3){\line(1,1)2}
\put(7,3){\line(-2,1)4}
\put(7,3){\line(-1,1)2}
\put(7,3){\line(0,1)2}
\put(3.85,.3){$0$}
\put(.35,2.85){$a$}
\put(2.35,2.85){$b$}
\put(5.4,2.85){$c$}
\put(7.4,2.85){$d$}
\put(.35,4.85){$d'$}
\put(2.35,4.85){$c'$}
\put(5.4,4.85){$b'$}
\put(7.4,4.85){$a'$}
\put(3.85,7.4){$1$}
\put(-4.8,-.75){{\rm Figure~2. Non-distributive bounded poset with a complementation}}
\end{picture}
\end{center}
	
\vspace*{4mm}
	
is not distributive since
\[
L\big(U(a,b),c\big)=L(d',1,c)=L(c)\ne0=LU(0,0)=LU\big(L(a,c),L(b,c)\big),
\]
but the unary operation $\,'$ defined by the table
\[
\begin{array}{l|llllllllll}
x  & 0 & a  & b  & c  & d  & a' & b' & c' & d' & 1 \\
\hline
x' & 1 & a' & b' & c' & d' & a  & b  & c  & d  & 0
\end{array}
\]
is a complementation on it. We have
\[
b+c=\Min U\big(L(b',c),L(b,c')\big)=\Min U(0,0)=\min U(0)=0
\]
though $b\ne c$.
\end{example}

The concept of Baaz delta can be extended for a poset $(P,\le,0,1)$ as follows: We define $\Delta(0):= 0$ and $\Delta(A):=1$ for each subset $A$ of $P$ different from $\{0\}$. In this case it is not an operation but a unary operator on the set $P$. A term constructed by the afore mentioned operators $\Max L$, $\Min U$, ${}'$ and $\Delta$ will be called an {\em operator term} over $\mathbf P=(P,\le,{}',\Delta,0,1)$. In particular, also the operator $+$ as defined above is an operator term over $\mathbf P$.

For a Boolean poset enriched with the operator $\Delta$ we can state and prove the following interpolation formula.

\begin{theorem}\label{th2}
Let $(B,\le,{}',0,1)$ be a Boolean poset, $n>1$, $a_1,\ldots,a_n$ different elements of $B$ and $f\colon B\to B$ and define $p_1,\ldots,p_n\colon B\to B$ and $p\colon B\to2^B$ by
\begin{align*}
p_i(x) & :=\bigwedge_{\substack{j=1 \\ j\ne i}}^n\Delta(x+a_j)\text{ for }i=1,\ldots,n, \\
  p(x) & :=\Min U\Big(\bigcup_{i=1}^n\big(f(a_i)\wedge p_i(x)\big)\Big)
\end{align*}
for all $x\in B$. Then $p(x)$ is an operator term over $(B,\le,{}',\Delta,0,1)$ satisfying $p(a_k)=f(a_k)$ for $k=1,\ldots,n$.
\end{theorem}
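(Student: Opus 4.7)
The plan follows the template set by Theorems~\ref{th3} and~\ref{th1}: evaluate $p$ at each point $a_k$ by first showing that the ``indicator'' terms $p_i(a_k)$ behave as a Kronecker delta, then reading off the value $f(a_k)$ from the combined expression. The one novelty is that in a Boolean poset the operator $+$ produces subsets rather than elements, and the outer $\Min U$ in the definition of $p(x)$ is precisely what converts the final union back into an element of $B$.

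First I would verify that $p_i(a_k)=\delta_{ik}$ for $i,k\in\{1,\ldots,n\}$. By the poset version of the Baaz delta, $\Delta(a_k+a_j)=0$ when $a_k+a_j=0$ and $\Delta(a_k+a_j)=1$ otherwise. Proposition~\ref{prop1} gives $a_k+a_j=0$ iff $a_k=a_j$, and by hypothesis the elements $a_1,\ldots,a_n$ are pairwise distinct. Hence if $i=k$, then $\Delta(a_k+a_j)=1$ for every $j\ne i$, so $p_i(a_k)=1$; while if $i\ne k$, then the index $j=k$ lies in the meet and contributes $\Delta(a_k+a_k)=\Delta(0)=0$, forcing $p_i(a_k)=0$.

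Next I would compute $f(a_i)\wedge p_i(a_k)$, which equals $f(a_k)$ when $i=k$ and $0$ when $i\ne k$. Since $n>1$, the union $\bigcup_{i=1}^n\big(f(a_i)\wedge p_i(a_k)\big)$ therefore equals $\{0,f(a_k)\}$. Because $0\le f(a_k)$ we have $U\big(0,f(a_k)\big)=U\big(f(a_k)\big)$, whose unique minimal element is $f(a_k)$ itself. Thus
\[
p(a_k)=\Min U\big(\{0,f(a_k)\}\big)=f(a_k),
\]
as required. That $p(x)$ is an operator term over $(B,\le,{}',\Delta,0,1)$ is clear from its syntactic build-up out of $\Max L$, $\Min U$, ${}'$, $\Delta$, and the operator term $+$.

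I do not foresee any genuine obstacle; the only care required is bookkeeping for the types of the various subexpressions, since $x+a_j$ is in general a subset of $B$ rather than an element, $\Delta$ is defined on subsets and returns an element of $\{0,1\}$, and the outer $\Min U$ is what finally collapses the accumulated union of singletons back into a single element of $B$.
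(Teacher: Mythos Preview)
Your proposal is correct and follows essentially the same route as the paper's proof: establish $p_i(a_k)=\delta_{ik}$ and then read off $p(a_k)=f(a_k)$ from the $\Min U$ expression. You are in fact more explicit than the paper in invoking Proposition~\ref{prop1} to justify the Kronecker-delta behavior and in tracking that the union is $\{0,f(a_k)\}$ before collapsing it via $\Min U$; the paper leaves both of these steps implicit.
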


\begin{remark}
Observe that
\[
\bigwedge_{\substack{j=1 \\ j\ne i}}^n\Delta(x+a_j)
\]
can be written in the form
\[
\Max L\big(\bigcup_{\substack{j=1 \\ j\ne i}}^n\Delta(x+a_j)\big)
\]
and $f(a_i)\wedge p_i(x)$ in the form $\Max L\big(f(a_i),p_i(x)\big)$.
\end{remark}

\begin{proof}[Proof of Theorem~\ref{th2}]
Clearly,
\[
\bigwedge_{\substack{j=1 \\ j\ne i}}^n\Delta(x+a_j)
\]
is defined because every member $\Delta(x+a_j)$ of this intersection equals either $0$ or $1$. Thus we have $p_i(a_k)=\delta_{ik}$ for $i,k=1,\ldots,n$ and hence
\[
p(a_k)=\Min U\Big(\bigcup_{i=1}^n\big(f(a_i)\wedge p_i(a_k)\big)\Big)=\Min U\Big(\bigcup_{i=1}^n\big(f(a_i)\wedge\delta_{ik}\big)\Big)=\Min U\big(f(a_k)\big)=f(a_k)
\]
for $k=1,\ldots,n$.
\end{proof}

That the interpolation formula from Theorem~\ref{th2} really works for Boolean posets which need not be Boolean algebras is shown in the following example.

\begin{example}\label{ex1}
Consider the Boolean poset $(B,\le,0,1)$ depicted in Fig.~3
	
\vspace*{-4mm}
	
\begin{center}
\setlength{\unitlength}{7mm}
\begin{picture}(8,8)
\put(4,1){\circle*{.3}}
\put(1,3){\circle*{.3}}
\put(3,3){\circle*{.3}}
\put(5,3){\circle*{.3}}
\put(7,3){\circle*{.3}}
\put(1,5){\circle*{.3}}
\put(3,5){\circle*{.3}}
\put(5,5){\circle*{.3}}
\put(7,5){\circle*{.3}}
\put(4,7){\circle*{.3}}
\put(4,1){\line(-3,2)3}
\put(4,1){\line(-1,2)1}
\put(4,1){\line(1,2)1}
\put(4,1){\line(3,2)3}
\put(4,7){\line(-3,-2)3}
\put(4,7){\line(-1,-2)1}
\put(4,7){\line(1,-2)1}
\put(4,7){\line(3,-2)3}
\put(1,3){\line(0,1)2}
\put(1,3){\line(1,1)2}
\put(1,3){\line(2,1)4}
\put(3,3){\line(-1,1)2}
\put(3,3){\line(0,1)2}
\put(3,3){\line(2,1)4}
\put(5,3){\line(-2,1)4}
\put(5,3){\line(0,1)2}
\put(5,3){\line(1,1)2}
\put(7,3){\line(-2,1)4}
\put(7,3){\line(-1,1)2}
\put(7,3){\line(0,1)2}
\put(3.85,.3){$0$}
\put(.35,2.85){$a$}
\put(2.35,2.85){$b$}
\put(5.4,2.85){$c$}
\put(7.4,2.85){$d$}
\put(.35,4.85){$d'$}
\put(2.35,4.85){$c'$}
\put(5.4,4.85){$b'$}
\put(7.4,4.85){$a'$}
\put(3.85,7.4){$1$}
\put(.8,-.75){{\rm Figure~3. Boolean poset}}
\end{picture}
\end{center}
	
\vspace*{4mm}
	
One can see that e.g.\ $b+c'$ is not an element of $B$, namely
\[
b+c'=U\big(L(b',c'),L(b,c)\big)=\Min U(a,d)=\{b',c'\}.
\]
Now consider a mapping $f\colon B\to B$ with
\begin{align*}
 f(0) & =a, \\
 f(a) & =c, \\
 f(b) & =d', \\
f(c') & =1.
\end{align*}
In accordance with Theorem~\ref{th2} define $p_1,\ldots,p_4\colon B\to B$ and $p\colon B\to2^B$ by
\begin{align*}
p_1(x) & :=\Delta(x+a)\wedge\Delta(x+b)\wedge\Delta(x+c'), \\	
p_2(x) & :=\Delta(x+0)\wedge\Delta(x+b)\wedge\Delta(x+c'), \\	
p_3(x) & :=\Delta(x+0)\wedge\Delta(x+a)\wedge\Delta(x+c'), \\	
p_4(x) & :=\Delta(x+0)\wedge\Delta(x+a)\wedge\Delta(x+b), \\
  p(x) & :=\Min U(\{a\wedge p_1(x),c\wedge p_2(x),d'\wedge p_3(x),1\wedge p_4(x)\})
\end{align*}
for all $x\in B$. Then really
\begin{align*}
 p(0) & =a, \\
 p(a) & =c, \\
 p(b) & =d', \\
p(c') & =1.
\end{align*}
\end{example}

\begin{remark}
	Having in mind the non-distributive complemented poset from Example~\ref{ex1}, we see immediately that a similar interpolation formula is not possible in this case since $p_i(a_k)$ need not be equal to Kronecker's delta $\delta_{ik}$, namely, if $n=2$, $a_1=b$, $a_2=c$ and $f(a_1)\ne0$ then
	\begin{align*}
		p_1(a_1) & =\Delta(b+c)=\Delta(0)=0\ne1=\delta_{11}, \\
		p_2(a_1) & =\Delta(b+b)=\Delta(0)=0, \\
		p(a_1) & =\Min U\big(f(a_1)\wedge p_1(a_1),f(a_2)\wedge p_2(a_1)\big)=\Min U(0,0)=0\ne f(a_1).
	\end{align*}
\end{remark}








Authors' addresses:

Ivan Chajda \\
Palack\'y University Olomouc \\
Faculty of Science \\
Department of Algebra and Geometry \\
17.\ listopadu 12 \\
771 46 Olomouc \\
Czech Republic \\
ivan.chajda@upol.cz

Helmut L\"anger \\
TU Wien \\
Faculty of Mathematics and Geoinformation \\
Institute of Discrete Mathematics and Geometry \\
Wiedner Hauptstra\ss e 8-10 \\
1040 Vienna \\
Austria, and \\
Palack\'y University Olomouc \\
Faculty of Science \\
Department of Algebra and Geometry \\
17.\ listopadu 12 \\
771 46 Olomouc \\
Czech Republic \\
helmut.laenger@tuwien.ac.at
\end{document}